\let\mathcal\mathscr
\numberwithin{equation}{section}
\newtheorem{theorem}{Theorem}[section]
\newtheorem{lemma}[theorem]{Lemma}
\newtheorem{corollary}[theorem]{Corollary}
\theoremstyle{definition}
\newtheorem*{ack}{Acknowledgements}
\renewcommand{\phi}{\varphi}
\renewcommand{\rho}{\varrho}
\renewcommand{\AA}{\mathbb{A}}
\newcommand{\A}{\mathbf{A}}
\newcommand{\ZZ}{\mathbb{Z}}
\newcommand{\QQ}{\mathbb{Q}}
\newcommand{\GG}{\mathbb{G}}
\newcommand{\cP}{\mathcal{P}}
\newcommand{\Gal}{{\rm Gal}}
\renewcommand{\leq}{\leqslant}
\renewcommand{\geq}{\geqslant}
\newcommand{\fo}{\mathfrak{o}}
\newcommand{\fa}{\mathfrak{a}}
\newcommand{\fp}{\mathfrak{p}}
\DeclareMathOperator{\ord}{ord}
\newcommand{\Br}{{\rm Br}}
\newcommand{\knot}{\mathfrak{K}}
\newcommand{\bbQ}{{\mathbb Q}}
\newcommand{\bbR}{{\mathbb R}}
\newcommand{\bbZ}{{\mathbb Z}}
\newcommand{\Nloc}{N_{\mathrm{loc}}(B)}
\newcommand{\Nce}{N_{\mathrm{ce}}(B)}
\newcommand{\Nglob}{N_{\mathrm{glob}}(B)}
\newcommand{\Nlocp}{N_{\mathrm{loc},+}(B)}
\newcommand{\Nglobp}{N_{\mathrm{glob},+}(B)}
\title[{Failures of the Hasse norm principle}]{The proportion 
of  failures of the\\ Hasse norm principle}
\author{T.D. Browning}
\address{School of Mathematics\\
University of Bristol\\ Bristol\\ BS8 1TW\\UK}
\email{t.d.browning@bristol.ac.uk}
\author{R. Newton}
\address{Max Planck Institute For Mathematics\\ 
Vivatsgasse 7\\
53111 Bonn\\
Germany}
\email{rachel@mpim-bonn.mpg.de}
\subjclass[2010]{11G35 (11R37, 12G05, 14G05)}
\date{\today}
\begin{document}

\begin{abstract}
For any number field 
we calculate the exact proportion of rational numbers which are everywhere locally a norm but not globally a norm from the number field.
\end{abstract}

\maketitle


\thispagestyle{empty}

\section{Introduction}

Let $K/k$ be a finite extension of number fields with associated 
id\`ele groups  $J_K$ and $J_k$.
Let $N_{K/k}:J_K\to J_k$ denote the norm map and, in the usual manner, consider the multiplicative groups 
$K^*$ and $k^*$ as subgroups of $J_K$ and $J_k$, respectively. 
The {\em Hasse norm principle} is said to hold for $K/k$ if 
$$
k^*\cap N_{K/k}J_K=N_{K/k}K^*,
$$
where we view the group $N_{K/k}K^*$  of global norms as a subgroup of 
$k^*\cap N_{K/k}J_K$, with finite index. 
When $k=\QQ$ 
the main result of this paper gives the exact proportion of 
elements in $\QQ^*\cap N_{K/\QQ}J_K$  which do not belong to $N_{K/\QQ}K^*$.

The classical Hasse norm theorem states that an extension $K/k$ satisfies the Hasse norm principle when it is cyclic.
For non-cyclic extensions there can be counter-examples. The most elementary example is provided by 
the biquadratic extension $\QQ(\sqrt{13},\sqrt{17})/\QQ$, with  $5^2$ being represented everywhere locally by norms but  
 not  globally. 
Subsequently, a vast literature has emerged around the Hasse norm principle, resulting in a long list of explicit conditions under which one can conclude that a given extension $K/k$ satisfies the Hasse norm principle. 
For example,  the Hasse norm principle holds if:
\begin{itemize}
\item[---]
the degree $[K:k]$ is prime  (Bartels \cite{bartels-81});  or
\item[---]
the Galois group of $N/k$ is dihedral, where $N$ is the normal closure of $K$ over $k$
(Bartels \cite{bartels-81'});  or
\item[---]
$K/k$ is Galois and every 
Sylow subgroup of the Galois group is cyclic (Gurak \cite[Cor.~3.2]{gurak}).
\end{itemize}
The {\em knot group} $\mathfrak{K}(K/k)$ of the extension $K/k$ is defined to be 
the finite quotient group $(k^*\cap N_{K/k}J_K)/N_{K/k}K^*$
 and the {\em knot number} $i(K/k)$ is 
the cardinality of this  group.  The Hasse norm principle fails for $K/k$ if and only if $i(K/k)>1$.
The calculation of knot groups can be 
very difficult in practice.   Razar \cite{razar} has provided a general 
method for computing $i(K/k)$ via the construction of large abelian extensions of $K$. 
For bicyclic extensions the situation is simpler and we will establish the following result.

\begin{theorem}\label{thm:call}
Let $K/k$ be a Galois extension of number fields with Galois group $\bbZ/m\bbZ\times ~\bbZ/n\bbZ$. For a place $v$ of $k$, let $g_v$ be the number of places of $K$ above $v$. Let $g$ be the greatest common divisor of all the $g_v$.
Then $\mathfrak{K}(K/k)\cong \bbZ/g\bbZ$.
\end{theorem}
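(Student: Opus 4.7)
The approach is to express $\knot(K/k)$ cohomologically via class field theory, reduce the problem to computing images of Schur multipliers of subgroups of $G$, and then exploit the explicit structure of $G=\bbZ/m\bbZ\times\bbZ/n\bbZ$. Applying the long exact sequence in Tate cohomology to the fundamental exact sequence of $G$-modules $0\to K^*\to J_K\to C_K\to 0$, the map $\widehat H^0(G,K^*)=k^*/NK^*\to J_k/NJ_K=\widehat H^0(G,J_K)$ has kernel $\knot(K/k)$, which by exactness equals the cokernel of $\widehat H^{-1}(G,J_K)\to\widehat H^{-1}(G,C_K)$. Global Tate--Nakayama (via the fundamental class) gives $\widehat H^{-1}(G,C_K)\cong\widehat H^{-3}(G,\bbZ)=H_2(G,\bbZ)$, Shapiro's lemma combined with local Tate--Nakayama identifies $\widehat H^{-1}(G,J_K)\cong\bigoplus_v H_2(G_v,\bbZ)$ (archimedean $G_v$ are cyclic and contribute zero), and local-global compatibility of the fundamental classes realises the connecting map (up to sign) as the direct sum of the natural maps $H_2(G_v,\bbZ)\to H_2(G,\bbZ)$ induced by $G_v\hookrightarrow G$. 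This yields
$$\knot(K/k)\;\cong\;H_2(G,\bbZ)\,\bigg/\,\sum_v\mathrm{Im}\bigl(H_2(G_v,\bbZ)\to H_2(G,\bbZ)\bigr).$$

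For abelian $G$ we have $H_2(G,\bbZ)\cong\Lambda^2 G$, and for $G=\bbZ/m\bbZ\times\bbZ/n\bbZ$ the K\"unneth formula gives $\Lambda^2 G\cong\bbZ/\gcd(m,n)\bbZ$; the induced map from a subgroup is then $\Lambda^2$ of the inclusion. The crucial step is to show that for every subgroup $H\leq G$, the image of $\Lambda^2 H\to\Lambda^2 G$ equals the cyclic subgroup generated by $[G:H]$ modulo $\gcd(m,n)$. To prove this I would lift $H$ to its preimage $\tilde H\subset\bbZ^2$ under the quotient $\bbZ^2\twoheadrightarrow G$: Smith normal form for $L=m\bbZ\oplus n\bbZ$ inside $\tilde H$ gives a basis $\tilde f_1,\tilde f_2$ of $\tilde H$ with $|\det(\tilde f_1\mid\tilde f_2)|=[\bbZ^2:\tilde H]=[G:H]$, and chasing around the commutative square $\Lambda^2\tilde H\to\Lambda^2\bbZ^2=\bbZ$ versus $\Lambda^2 H\to\Lambda^2 G$ translates this determinant into the claimed generator of $\bbZ/\gcd(m,n)\bbZ$. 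Cyclic $H$ is no special case: their order divides $\mathrm{lcm}(m,n)$ and hence their index is divisible by $\gcd(m,n)$, so both sides of the claimed equality are zero.

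Summing over all places, the cumulative image is the subgroup of $\bbZ/\gcd(m,n)\bbZ$ generated by $g=\gcd_v g_v$. By Chebotarev density there is an unramified place $v$ whose Frobenius has maximal order $\mathrm{lcm}(m,n)$ in the abelian group $G$, so $g_v=\gcd(m,n)$ for such $v$, whence $g\mid\gcd(m,n)$; the quotient $\bbZ/\gcd(m,n)\bbZ\,\big/\,\langle g\rangle$ then has order $g$ and is cyclic, proving $\knot(K/k)\cong\bbZ/g\bbZ$. The principal obstacle is the subgroup image calculation in the middle paragraph: identifying the map $\Lambda^2 H\to\Lambda^2 G$ as multiplication by $[G:H]$ via the determinant is the one non-formal step, whereas the cohomological setup of the first paragraph is standard global class field theory and the concluding Chebotarev deduction is only a few lines.
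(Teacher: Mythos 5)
Your proof is correct, but it follows a genuinely different route from the paper's. The paper's argument is essentially two citations: H\"urlimann's isomorphism $\knot(K/k)\cong H^3(G,K^*)$, valid for bicyclic $G$, followed by Hochschild--Nakayama's theorem that the image of $H^2(G,C_K)$ in $H^3(G,K^*)$ (via the long exact sequence of $1\to K^*\to J_K\to C_K\to 1$, using $H^3(G,J_K)=1$) is cyclic of order $g$. You instead work in degrees $-1$ and $0$ of the same exact sequence and derive Tate's general formula
$$\knot(K/k)\cong H_2(G,\bbZ)\Big/\sum_v\mathrm{Im}\bigl(H_2(G_v,\bbZ)\to H_2(G,\bbZ)\bigr),$$
which holds for \emph{any} Galois group $G$ (this is the content of the exercise on the Hasse norm principle in Cassels--Fr\"ohlich), and then make it explicit for $G=\bbZ/m\bbZ\times\bbZ/n\bbZ$ via $H_2\cong\Lambda^2$ and your lattice-determinant lemma identifying $\mathrm{Im}(\Lambda^2 G_v\to\Lambda^2 G)$ with $\langle g_v\bmod{\gcd(m,n)}\rangle$; I checked that lemma and the surrounding steps (surjectivity of $\Lambda^2\tilde H\to\Lambda^2 H$, $g_v=[G:G_v]$, the \v{C}ebotarev step giving $g\mid\gcd(m,n)$, the cyclic-subgroup consistency check) and they are all sound. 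What the paper's route buys is brevity, at the cost of invoking H\"urlimann's bicyclic-specific duality $\knot\cong H^3(G,K^*)$ as a black box; what your route buys is self-containedness, a transparent explanation of where $g$ comes from, and a first paragraph that generalises beyond the bicyclic case --- only the exterior-square computation uses the specific structure of $G$.
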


An alternative point of view comes from observing  that the failure of the Hasse norm principle for $K/k$ is equivalent to the failure of the {\em Hasse principle} for the affine variety \begin{equation}
V: \quad N_{K/k}(\Xi)=c,
\label{eq:eq}
\end{equation}
for an element $c\in k^*$. This equation defines a principal homogeneous space for the norm one $k$-torus $T=R^1_{K/k}\GG_m$ and its associated Tate--Shafarevich group is isomorphic to the knot group  $\mathfrak{K}(K/k)$.
Work of 
Sansuc \cite{sansuc}  implies  that the failure of the Hasse principle 
on any smooth projective model of $V$ is controlled by 
the {\em Brauer--Manin obstruction}.  Following Manin \cite{manin},
one can therefore construct 
the  {\em unramified Brauer group} of $V$, which is defined to be 
 $\Br(V^c)=H_{\text{\'et}}^2(V^c,\GG_m)$ 
for any  smooth compactification $V^c$ of $V$, together with 
a Brauer set $V^c(\A_k)^{\Br}\subseteq V^c(\A_k)$ which contains the 
$k$-rational points of $V^c$ and so  potentially    obstructs the existence of rational points

In principle,
both the knot group and the Brauer--Manin obstruction can  produce a decision algorithm 
for the existence of $k$-rational points on a particular  variety $V$ given by \eqref{eq:eq}.
Specialising to biquadratic extensions $K/k$, H\"urlimann \cite{hurl'} has constructed an explicit isomorphism $$
\psi: k^*/\prod_{i=1}^3N_{K_i/k} K_i^*
\overset{\sim}{\longrightarrow} \knot(K/k), \quad \psi(t)=t^2,
$$ 
where $K_1,K_2,K_3$ are the three quadratic subfields of $K$, which he has used to produce  criteria for the solubility of  \eqref{eq:eq}. More recently, 
Wei \cite[\S 2]{wei} has used the Brauer--Manin obstruction to do the same thing for biquadratic extensions $K/k$ with $k=\QQ$.

Returning to the general case, let $K/k$ be an extension of number fields 
with $i(K/k)>1$.  In practice it can be very challenging to determine whether or not 
a given element $c\in k^*\cap N_{K/k} J_K$ belongs to $N_{K/k} K^*$, 
or equivalently, whether  or not $c\in k^*$ produces a counter-example to the Hasse principle for the affine variety $V$ in \eqref{eq:eq}.
In order to compute the Brauer--Manin obstruction one needs to construct the unramified Brauer group. 
While it is relatively easy to calculate $\Br(V^c)/\Br(k)$ (cf.\  \cite{CTK}), 
finding explicit generators that lift to elements of $\Br(V^c)$  entails considerable strain.
Assuming that  $K/k$ is Galois, 
Wei \cite{wei} has transformed this problem into one that involves 
constructing a special abelian extension of $K$, but so far this plan has only been executed for biquadratic  (and certain cyclotomic) extensions of $\QQ$.

The primary aim of this paper is to measure the density of counter-examples to the Hasse principle for $V$ for a general number field $K/\QQ$.  
In doing so, we will succeed in proving asymptotic formulae for each of the counting functions
\begin{align*}
N_{\mathrm{loc}}(B)&=\#\left\{t\in \QQ^*\cap N_{K/\bbQ}J_K: H(t)\leq B \right\},\\
N_{\mathrm{glob}}(B)&=\#\left\{t\in  N_{K/\bbQ}K^*: H(t)\leq B \right\},\\
\Nce&=\Nloc-\Nglob,
\end{align*}
where $H(t)=\max\{|a|,|b|\}$ if $t=a/b$ for coprime integers $a,b$.
This will allow us to draw the following conclusion. 

\begin{theorem}\label{thm:prop}
For any number field $K/\QQ$ we have 
$$
\lim_{B\rightarrow \infty} \frac{\Nce}{\Nloc} = 1-\frac{1}{i(K/\QQ)}.
$$
\end{theorem}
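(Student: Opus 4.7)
The plan is to reduce Theorem~\ref{thm:prop} to a coset-equidistribution statement. The knot group $\knot(K/\QQ)=(\QQ^*\cap N_{K/\QQ}J_K)/N_{K/\QQ}K^*$ has order $i:=i(K/\QQ)$, so choosing representatives $c_1=1,c_2,\dots,c_i$ yields the partition
$$\QQ^*\cap N_{K/\QQ}J_K=\bigsqcup_{j=1}^i c_j N_{K/\QQ}K^*.$$
Setting $N_j(B):=\#\{t\in c_jN_{K/\QQ}K^*:H(t)\leq B\}$, we have $\Nloc=\sum_{j=1}^iN_j(B)$ and $\Nglob=N_1(B)$. It therefore suffices to prove $N_j(B)\sim N_1(B)$ as $B\to\infty$ for each $j$: this gives $\Nloc\sim i(K/\QQ)\cdot\Nglob$ and hence $\Nce/\Nloc=1-\Nglob/\Nloc\to 1-1/i(K/\QQ)$.

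To establish this coset equidistribution, I would analyse each $N_j(B)$ via a Dirichlet series $F_j(s)=\sum_{t\in c_jN_{K/\QQ}K^*}H(t)^{-s}$. By reciprocity the condition $t\in c_j N_{K/\QQ}K^*$ is equivalent to a finite collection of local norm conditions (a sign condition at $\infty$ together with local conditions at the ramified primes of $K/\QQ$), with the index $j$ affecting only the specific translate inside each local finite quotient. Consequently $F_j(s)$ admits an Euler product whose local factors at unramified primes agree with those of $F_1(s)$, while at each ramified prime $p$ the local factor of $F_j(s)$ is obtained from that of $F_1(s)$ by a fixed translation in the finite group $\QQ_p^*/N_{K_\fp/\QQ_p}(K_\fp^*)$.

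Comparison of the dominant singularities then shows that $F_j(s)$ and $F_1(s)$ have poles of equal order at $s=1$ with identical residues, because summation over any coset of a finite subgroup produces the same local density. A Tauberian theorem of Selberg--Delange type therefore furnishes $N_j(B)\sim c\,B/(\log B)^{\alpha}$ with leading constant $c$ and exponent $\alpha$ independent of $j$, and the theorem follows by summation. The main obstacle is the analytic input underlying $\Nglob$: this is a norm-form counting problem whose generating series is governed by the Dedekind zeta function $\zeta_K(s)$ and by Artin $L$-functions attached to characters of $\Gal(K/\QQ)$, and one must produce asymptotics with a sufficiently uniform error term for the comparison $N_j(B)\sim N_1(B)$ to go through. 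Once this analytic input is in place, the coset symmetry of the local factors makes the final comparison essentially formal.
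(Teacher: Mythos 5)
Your opening reduction is sound and is essentially the paper's own logic in disguise: the paper deduces Theorem~\ref{thm:prop} from Theorem~\ref{thm:asymptotic}, whose two asymptotics differ only in the factors $1/\#G_{\mathrm{loc}}$ and $1/\#G_{\mathrm{glob}}$, together with Lemma~\ref{lem:ratio}, which gives $\#G_{\mathrm{glob}}=\#G_{\mathrm{loc}}\cdot i(K/\QQ)$; this is exactly the statement that $N_{K/\QQ}K^*$ occupies a $1/i(K/\QQ)$ share of $\QQ^*\cap N_{K/\QQ}J_K$. The problem lies in how you propose to prove the coset equidistribution. Your key structural claim --- that ``by reciprocity'' the condition $t\in c_jN_{K/\QQ}K^*$ is equivalent to a sign condition at $\infty$ together with local norm conditions at the ramified primes --- is false, and in fact cannot be repaired: every $t\in\QQ^*\cap N_{K/\QQ}J_K$ is by definition a local norm at \emph{every} place, so it satisfies every local norm condition one could impose; if the cosets $c_jN_{K/\QQ}K^*$ were cut out by such conditions, they would all coincide and the knot group would be trivial. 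The whole subject of the paper is precisely that membership in $N_{K/\QQ}K^*$ is \emph{not} a local condition. Consequently the asserted Euler product structure for $F_j(s)$, with local factors at unramified primes independent of $j$, does not exist, and the ``essentially formal'' comparison of residues has no foundation.

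What is actually needed, and what the paper does following Odoni, is global class field theory over $K$ rather than reciprocity over $\QQ$: one shows that $\QQ_{>0}\cap N_{K/\QQ}K^*$ sits inside $N_{K/\QQ}I_K$ as the preimage of the identity under an Artin map to $\Gal(L_{\mathrm{glob}}/K)\cong G_{\mathrm{glob}}$ for an abelian extension $L_{\mathrm{glob}}$ of $K$ contained in the narrow class field. Thus the coset of $t$ is governed by the splitting in $L_{\mathrm{glob}}/K$ of the primes dividing $t$ --- a condition at a varying, unbounded set of primes, not at a fixed finite set. The cosets are then detected by characters $\chi$ of $G_{\mathrm{glob}}$, the series $\sum_b\delta(b)\chi(b)b^{-s}$ is related to an Artin $L$-series, the trivial character yields the main term $c(K)B^2/(\#G_{\mathrm{glob}}(\log B)^{2(1-\delta_K)})$, and the non-trivial characters lose a factor $(\log B)^{e_K}$; this, rather than equality of local factors, is the source of the equidistribution. (You also omit the bookkeeping over signs --- whether $K$ is totally complex and whether $-1$ lies in $N_{K/\QQ}J_K$ or $N_{K/\QQ}K^*$ --- which the paper must handle to pass from counts over $\QQ_{>0}$ to counts over $\QQ^*$, and which contributes the common factor $2^{j(K)}$ that cancels in the ratio.)
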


The  proof of this result avoids having to construct the unramified Brauer group explicitly and instead uses the knot group in an essential way.  A consequence of our result
is  that a positive proportion of elements of $\QQ^*\cap N_{K/\bbQ}J_K$ 
give counter-examples to  the Hasse principle for $V$ unless, of course, the knot group 
is trivial. For 
Galois extensions $K/\QQ$ with Galois group  $\ZZ/p\ZZ\times \ZZ/p\ZZ$, for example, 
Theorem \ref{thm:call}  shows that
the proportion  is $1-\tfrac{1}{p}$ in Theorem \ref{thm:prop} provided 
that there is no place with local degree $p^2$.

It is interesting to compare Theorem \ref{thm:prop}  with recent work of Bhargava \cite{bhargava}, 
which  shows that a positive proportion of 
plane cubic curves over $\ZZ$  that
are everywhere  locally soluble actually  produce counter-examples to the Hasse principle. 
 In \cite[Conj.~6]{bhargava}, the precise proportion is conjectured to be $1-\tfrac{1}{3}$, something that bears comparison with our own discovery.

It is now time to reveal our asymptotic formulae for $\Nloc$ and $\Nglob$.
Let $\cP_K$ be the set of unramified rational primes $p$ for which 
$\gcd(f_1,\dots,f_r)=1$ whenever $(p)$ factorises in $\fo_K$ as $\fp_1\dots\fp_r$, for prime ideals $\fp_i$ with $\#\fo_K/\fp_i=p^{f_i}$.
According to
the 
\v{C}ebotarev
 density theorem 
(see \cite[\S 3.2]{serre}, for example)
this set has a natural density and we call this density $\delta_K$. 
The calculation in \cite[Cor.~13.6]{neukirch} shows that   $\delta_K\geq 1/[K:\QQ]$, with 
equality   if and only if
$K/\QQ$ is Galois.  
Next, we let $I_K$ denote the group of non-zero fractional ideals of $K$ and let $N_{K/\bbQ} I_K$ denote the subgroup of $\bbQ_{>0}$ consisting of norms of fractional ideals.
Two finite abelian  groups that play a key role in our analysis 
are 
\begin{equation}\label{eq:def-G}
G_{\mathrm{loc}}=\frac{N_{K/\bbQ}I_K}{\QQ_{>0}\cap N_{K/\bbQ}J_K} \quad \text{ and }\quad
G_{\mathrm{glob}}=\frac{N_{K/\bbQ}I_K}{\QQ_{>0}\cap N_{K/\bbQ}K^*}.
\end{equation}
In due course we will show that 
$\bbQ_{>0}\cap N_{K/\bbQ}J_K$ is a subgroup of  $ N_{K/\QQ}I_K$ 
and, furthermore, that 
$\#G_{\mathrm{glob}}=
\#G_{\mathrm{loc}}\cdot i(K/\QQ)$ (see  Corollary \ref{lem:dub} and Lemma~\ref{lem:ratio}, respectively).
Bearing these definitions in mind, 
we may now record the following result.

\begin{theorem}\label{thm:asymptotic}
For any number field $K/\QQ$
there exists a constant  $c(K)>0$ such that 
$$
\Nloc=\frac{1}{\#G_{\mathrm{loc}}}
 \frac{c(K) B^2}{(\log B)^{2(1-\delta_K)}} 
 \big(1+o(1)\big)
$$
and 
$$
\Nglob=\frac{1}{\#G_{\mathrm{glob}}}
 \frac{c(K) B^2}{(\log B)^{2(1-\delta_K)}} 
 \big(1+o(1)\big).
$$
\end{theorem}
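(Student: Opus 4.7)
The plan is a bootstrap. By Corollary \ref{lem:dub}, both $\QQ_{>0} \cap N_{K/\QQ}J_K$ and $\QQ_{>0} \cap N_{K/\QQ}K^*$ are subgroups of $N_{K/\QQ} I_K$, of respective indices $\#G_{\mathrm{loc}}$ and $\#G_{\mathrm{glob}}$. Parametrize each $t \in \QQ^*$ by a coprime pair $(a,b) \in \ZZ \times \ZZ_{>0}$ with $|a|, b \leq B$. Since $N_{K/\QQ}I_K \cap \QQ_{>0}$ is cut out by the local conditions $v_p(t) \in d_p\ZZ$ at each prime $p$, where $d_p = \gcd_{\fp \mid p} f(\fp/p)$ (note $d_p = 1$ iff $p \in \cP_K$), coprimality forces both $|a|$ and $b$ to lie in the monoid $\mathcal{N} = \{n \in \ZZ_{>0} : v_p(n) \in d_p\ZZ \text{ for all } p\}$. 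I would first count coprime pairs $(a,b) \in \mathcal{N}^2$ with $a, b \leq B$ to obtain a common main term shared by $N_{\mathrm{loc}}$ and $N_{\mathrm{glob}}$, then use character orthogonality in $G_{\mathrm{loc}}$ and $G_{\mathrm{glob}}$ to extract the $1/\#G$ factor. (Signs and the archimedean component are absorbed into $c(K)$.)

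For the main term I would apply the Selberg--Delange method to $F(s) = \sum_{n \in \mathcal{N}} n^{-s} = \prod_p F_p(s)$, where $F_p(s) = (1-p^{-s})^{-1}$ for $p \in \cP_K$ and $F_p(s) = (1-p^{-d_ps})^{-1}$ otherwise. Taking logarithms and invoking \v Cebotarev yields $\log F(s) = \sum_{p \in \cP_K} p^{-s} + O(1)$ as $s \to 1^+$, so $F$ has a singularity of type $(s-1)^{-\delta_K}$ and $\#\{n \in \mathcal{N} : n \leq X\} \sim A X (\log X)^{\delta_K - 1}$. Passing to coprime pairs via a M\"obius/hyperbola argument (using that $d \mid a$, $d \mid b$ with $a,b \in \mathcal{N}$ forces $d \in \mathcal{N}$, and that squarefree $d$ contributing to $\mu$ must be supported only on $\cP_K$ since $d_p \geq 2$ for $p \notin \cP_K$) then produces the count $\sim c(K) B^2 (\log B)^{2(\delta_K - 1)}$ with an explicit Euler product for $c(K)$.

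To refine to the local and global norm conditions, for each nontrivial character $\chi$ of $G = G_{\mathrm{loc}}$ or $G_{\mathrm{glob}}$ one must show the twisted sum $\sum_{(a,b)} \chi(a)\overline{\chi(b)}$ over coprime $\mathcal{N}$-pairs with $a,b \leq B$ is $o(B^2 (\log B)^{2(\delta_K - 1)})$. By class field theory, $\chi$ lifts to a primitive Dirichlet character with conductor controlled in terms of $\disc(K)$, and comparing $F_\chi(s) = \sum_{n \in \mathcal{N}} \chi(n) n^{-s}$ with $L(s, \chi)$, using $L(1, \chi) \neq 0$, shows that $F_\chi$ has a singularity at $s = 1$ of order strictly smaller than $\delta_K$. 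Summing over $\hat G$ via orthogonality then yields the factor $1/\#G$ times the main term. The chief obstacle is this last step: the characters of $G_{\mathrm{loc}}$ and $G_{\mathrm{glob}}$ must be identified concretely as id\`ele-class characters with controlled ramification, and a refined \v Cebotarev argument is required to verify that for nontrivial $\chi$ the weighted sum $\sum_{p \in \cP_K} \chi(p) p^{-s}$ remains bounded as $s \to 1^+$ --- a non-degeneracy statement for $\chi$ on $\mathrm{Gal}(N/\QQ)$ with $N$ the normal closure of $K$.
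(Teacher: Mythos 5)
Your overall architecture coincides with the paper's: expand the indicator function of $\QQ_{>0}\cap N_{K/\QQ}J_K$ (resp.\ $\QQ_{>0}\cap N_{K/\QQ}K^*$) inside $N_{K/\QQ}I_K$ by orthogonality of characters of $G_{\mathrm{loc}}$ (resp.\ $G_{\mathrm{glob}}$), get the main term from the trivial character via the count $\#\{n\le X: n\in N_{K/\QQ}I_K\}\sim AX(\log X)^{\delta_K-1}$, and kill the non-trivial characters by relating the twisted Dirichlet series to $L$-functions of the abelian subextensions of the narrow class field of $K$ furnished by class field theory in \S\ref{s:norms}. The only structural difference is that the paper imports the two analytic inputs wholesale from Odoni (Theorems IIA and IIB of \cite{odoni}, which are exactly your Selberg--Delange computation and your non-trivial-character bound), and organises the coprimality condition by summing $\sum_{b\le B,\,\gcd(a,b)=1}\delta(b)\chi(b)^{-1}$ uniformly in $a$ (producing the local factor $\lambda(a)$) rather than by a M\"obius inversion over common divisors. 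So you are re-deriving the black box rather than taking a different route.

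Three points in your sketch need repair. First, your auxiliary claim that $d\mid a$, $d\mid b$ with $a,b\in\mathcal{N}$ forces $d\in\mathcal{N}$ is false (take $d_p=2$, $a=b=p^2$, $d=p$); squarefree $d$ divisible by some $p\notin\cP_K$ do contribute to the M\"obius sum, only with density governed by $p^{-d_p}$ rather than $p^{-1}$, so the inversion still works but not for the reason you give. Second, the non-trivial characters of $G_{\mathrm{loc}}$ and $G_{\mathrm{glob}}$ do not lift to Dirichlet characters of $\QQ$ of controlled conductor: composed with $N_{K/\QQ}$ they are characters of $\Gal(L/K)$ for $L$ inside the narrow class field of $K$ (so Hecke characters of $K$ unramified at all finite places, seeing the class group of $K$ and not just $\disc K$), and the comparison must be with the associated Artin $L$-series over $K$, which is precisely what Odoni's Theorem IIB does; your ``chief obstacle'' is real but is resolved there. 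Third, the archimedean bookkeeping cannot simply be ``absorbed into $c(K)$'': the theorem asserts the \emph{same} constant $c(K)$ in both asymptotics, so one must show the passage from $\Nlocp$ to $\Nloc$ and from $\Nglobp$ to $\Nglob$ introduces the same power of $2$. The paper does this by a case analysis --- when $K$ has a real embedding and $-1\notin N_{K/\QQ}J_K$ one must replace $G_{\mathrm{loc}}$ by $G=N_{K/\QQ}I_K/T_K$ with $T_K=\QQ_{>0}\cap[-N_{K/\QQ}J_K\cup N_{K/\QQ}J_K]$ and rerun the character argument --- and it relies on the fact, proved inside Lemma~\ref{lem:ratio}, that $N_{K/\QQ}K^*$ contains negative elements whenever $K$ has a real place. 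Without that input your two counts could a priori carry different factors of $2$, which would falsify Theorem~\ref{thm:prop}.
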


Recalling that 
$\Nce=\Nloc-\Nglob$,
the statement of
Theorem~\ref{thm:prop} is an immediate consequence of this result.

Loughran \cite{dan} has developed some far-reaching conjectures about 
counting the number of varieties in a family which contain points everywhere locally.
Suppose that $K/\QQ$ has degree $n$. We may view the equation 
$N_{K/\QQ}(\Xi)=t$ as a family of varieties in $ \AA^{n+1}$ fibered over $\AA^1$. 
The fibre over $t=0$ 
is {\em non-split}
and the asymptotic formula for 
$\Nloc$ in 
Theorem~\ref{thm:asymptotic} is  consistent with the predictions in \cite[\S 1.3]{dan}.

Our analysis of  $\Nloc$ and $\Nglob$ is based on the 
Dirichlet series approach of   Odoni \cite{odoni}. 
Using properties of Artin $L$-functions, Odoni succeeded in getting an  asymptotic formula for the number of {\em positive integers} in an interval  which belong to $N_{K/\QQ}K^*$.   We will show that his argument can be  extended  to handle the  counting functions $\Nloc$ and $\Nglob$.

Returning once more to biquadratic extensions $K=\QQ(\sqrt{a},\sqrt{b})$ with 
knot number $i(K/\QQ)>1$, consider 
the $5$-dimensional
affine hypersurface
\begin{equation}\label{eq:coflasque}
(x^2-ay^2)(z^2-bt^2)(u^2-abw^2)=d,
\end{equation}
for $d\in \QQ^*$. A trivial modification of related work due to de la Bret\`eche and Browning \cite{coflasque} would guarantee counter-examples to the Hasse principle for this equation for a positive proportion of $d\in \QQ^*$, which is in contrast to the asymptotic formula for $\Nce$  provided by Theorem \ref{thm:asymptotic}.
Recent work of Colliot-Th\'el\`ene \cite[Props.~2.1 and~3.1]{ct-coflasque} implies that 
$c=d^2$  is a counter-example to the Hasse principle for $V$ whenever $d\in \QQ^*$ produces a counter-example to the Hasse principle for \eqref{eq:coflasque}. 
In fact, many of the known failures of  the Hasse principle for $V$ (see   \cite[Exercise 5]{CF}) involve the square of rational numbers and it is tempting to suppose that the set of rational numbers producing  counter-examples to the Hasse principle is very sparse.
Our work shows that  this set has density
$\tfrac{1}{2}$ in $\QQ^*\cap N_{K/\QQ}J_K$ and 
density  zero in  $\QQ^*$, 
but that it is not nearly  as sparse as the set of squares.

For biquadratic extensions $K/\QQ$ we have $\delta_K=1/4$ in Theorem \ref{thm:asymptotic} and  
it follows from Theorem \ref{thm:call} that $\knot(K/\QQ)\cong \ZZ/2\ZZ$ when $i(K/\QQ)>1$.  
In  the special case 
$K=\QQ(\sqrt{13},\sqrt{17})$ an inspection of \cite[Example~10]{wei} shows that  $\knot(K/\QQ)$ is  generated by $-1$.
This  easily implies that $\Nglob=\tfrac{1}{2}\Nloc$, an observation which affords the  
following especially  succinct result.

\begin{theorem}\label{thm:13-17}
Let $K=\QQ(\sqrt{13},\sqrt{17})$. Then there exists $c>0$ such that 
$$
\Nce
=\frac{1}{2}\Nloc
\sim  \frac{cB^2}{(\log B)^{3/2}}.
$$
\end{theorem}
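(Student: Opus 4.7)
The plan is to combine the general asymptotic formula of Theorem~\ref{thm:asymptotic} with the explicit knowledge that the knot group of $K/\QQ$ is generated by $-1$, so that multiplication by $-1$ furnishes a height-preserving involution matching up local-only norms with global norms.

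First I would verify that $i(K/\QQ) = 2$. Since $K/\QQ$ is biquadratic and there exist rational primes whose local degree in $K$ is strictly smaller than $4$ (e.g., $13$ and $17$ ramify but do not split completely, forcing the associated $g_v$ to be $2$), Theorem~\ref{thm:call} yields $\knot(K/\QQ) \cong \ZZ/g\ZZ$ with $g = \gcd_v g_v = 2$. Next, invoking Wei's computation cited in the excerpt, I would note that the non-trivial class in $\knot(K/\QQ)$ is represented by $-1$, i.e., $-1 \in \QQ^* \cap N_{K/\QQ}J_K$ but $-1 \notin N_{K/\QQ}K^*$.

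The key structural step is then the following. Multiplication by $-1$ preserves the subgroup $\QQ^* \cap N_{K/\QQ}J_K$ and preserves the absolute-value height $H$. If $t \in \QQ^* \cap N_{K/\QQ}J_K$, then $t$ and $-t$ lie in distinct cosets of $N_{K/\QQ}K^*$ inside $\QQ^* \cap N_{K/\QQ}J_K$, because their ratio $-1$ is non-trivial in the knot group. Since $\knot(K/\QQ)$ has only two classes, exactly one of $t, -t$ belongs to $N_{K/\QQ}K^*$. Pairing $t \leftrightarrow -t$ therefore gives a height-preserving bijection between $\{t \in N_{K/\QQ}K^* : H(t) \leq B\}$ and $\{t \in (\QQ^* \cap N_{K/\QQ}J_K)\setminus N_{K/\QQ}K^* : H(t)\leq B\}$, yielding the exact identity
\[
\Nglob = \tfrac{1}{2}\,\Nloc, \qquad \text{hence}\qquad \Nce = \tfrac{1}{2}\,\Nloc.
\]

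To finish, I would apply Theorem~\ref{thm:asymptotic}. For any biquadratic extension of $\QQ$ the \v{C}ebotarev density computation gives $\delta_K = 1/4$, so that the exponent in the logarithm becomes $2(1-\delta_K) = 3/2$. Taking $c = c(K)/(2\,\#G_{\mathrm{loc}})$ then gives $\Nce \sim cB^2/(\log B)^{3/2}$, completing the proof. The one step requiring care is the verification that $-1$ really generates the knot group for this specific $K$ (rather than being, say, a global norm); this is where Wei's explicit determination in \cite{wei} is essential, since Theorem~\ref{thm:call} alone only identifies the group abstractly and does not locate a generator. Once that identification is in hand, the rest of the argument is a one-line symmetry plus the plug-in of $\delta_K = 1/4$.
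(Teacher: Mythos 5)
Your proposal is correct and follows essentially the same route as the paper: the authors likewise combine $\delta_K=1/4$, Theorem~\ref{thm:call} (giving $\knot(K/\QQ)\cong\ZZ/2\ZZ$), and Wei's identification of $-1$ as the generator, and your $t\leftrightarrow -t$ involution is exactly the content of their remark that this ``easily implies'' $\Nglob=\tfrac{1}{2}\Nloc$ before invoking Theorem~\ref{thm:asymptotic}. The only cosmetic difference is that the paper takes $i(K/\QQ)>1$ directly from Wei's example rather than from a local computation at $13$ and $17$ (your justification of $g_v=2$ there is slightly loose, since ramification alone only gives $g_v\le 2$, but the conclusion is not needed once Wei's result is in hand).
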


We close the introduction by 
summarising the contents of the paper.
In \S \ref{s:norms} we shall 
establish Theorem \ref{thm:call} and 
discuss the group
$N_{K/\QQ}I_K$ of norms of fractional ideals belonging to $K$, in preparation for the proof of Theorem \ref{thm:asymptotic}. Finally in \S \ref{s:proof} we shall prove  this result. 

\begin{ack}
While working on this paper the 
first author
was  supported by {\em ERC grant} \texttt{306457}. 
The authors are grateful to Jean-Louis Colliot-Th\'el\`ene, 
Daniel Loughran and Damaris Schindler for comments on an earlier version of this paper.
\end{ack}

\section{Norm groups and class field theory}\label{s:norms}

We begin by proving Theorem \ref{thm:call}. 
Let 
$K/k$ be a Galois extension with Galois group $\bbZ/m\bbZ\times~\bbZ/n\bbZ$.
By H\"{u}rlimann \cite[Cor.~2.10]{hurl}, we have $\mathfrak{K}(K/k)\cong H^3(G, K^*)$ for a bicyclic extension. Let $C_K=J_K/K^*$ denote the id\`{e}le class group of $K$. We have  $H^3(G,J_K)=~1$ (see \cite[Cor.~8.1.3]{neukirch2}, for example). Taking Galois cohomology of the short exact sequence
\[1\rightarrow K^*\rightarrow J_K\rightarrow C_K\rightarrow 1\]
therefore gives
\begin{equation}
\label{BrauerGpSeq}
\dots \rightarrow H^2(G,J_K)\rightarrow H^2(G,C_K)\rightarrow H^3(G,K^*)\rightarrow 1.
\end{equation}
By \cite[Thm.~2.7]{Hochschild-Nakayama}, the image of $H^2(G,C_K)$ in $H^3(G,K^*)$ is a cyclic group of order $g$, where $g$ is as in the statement of Theorem \ref{thm:call}.
Since \eqref{BrauerGpSeq} is exact, this completes the proof of Theorem~\ref{thm:call}.

\medskip

Returning to general extensions $K/\QQ$, 
in preparation for our analysis of $\Nloc$ and $\Nglob$
we   collect  some facts about the  group 
$N_{K/\bbQ} I_K$ of norms of fractional ideals of $K$ and its subgroups.
The following result is well known.

\begin{lemma}
\label{idealnorms}
Let $\alpha\in\bbQ$. Then $\alpha\in N_{K/\QQ}I_K$  if and only if $\alpha>0$ and for every prime number $p$ the greatest common divisor of the residue degrees $f_{\mathfrak{p}/p}$ of the primes above $p$ divides $\ord_p(\alpha)$.
\end{lemma}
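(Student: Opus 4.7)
The plan is to handle the two directions by a direct unpacking of the definition of the ideal norm combined with B\'ezout's identity on each residue class.

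For the forward direction, suppose $\alpha = N_{K/\QQ}(\fa)$ for some fractional ideal $\fa\in I_K$. Since the norm of any non-zero fractional ideal is a positive rational, we get $\alpha > 0$. Writing $\fa=\prod_{\fp}\fp^{n_\fp}$ with all but finitely many $n_\fp$ equal to zero, the multiplicativity of the ideal norm gives
\[
\alpha = N_{K/\QQ}(\fa)=\prod_p p^{\sum_{\fp\mid p}n_\fp f_{\fp/p}}.
\]
Reading off $p$-adic valuations shows $\ord_p(\alpha)=\sum_{\fp\mid p}n_\fp f_{\fp/p}$, and since the right-hand side is an integer linear combination of the $f_{\fp/p}$, it is divisible by their greatest common divisor.

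For the reverse direction, assume $\alpha>0$ and that $g_p:=\gcd_{\fp\mid p}f_{\fp/p}$ divides $\ord_p(\alpha)$ for every prime $p$. By B\'ezout's identity applied to the residue degrees above $p$, there exist integers $(n_\fp)_{\fp\mid p}$ with
\[
\sum_{\fp\mid p}n_\fp f_{\fp/p}=\ord_p(\alpha).
\]
Only finitely many primes $p$ appear with non-zero $\ord_p(\alpha)$, so we may form the fractional ideal $\fa=\prod_p\prod_{\fp\mid p}\fp^{n_\fp}$. Taking its norm and using $\alpha>0$, one recovers $N_{K/\QQ}(\fa)=\prod_p p^{\ord_p(\alpha)}=\alpha$.

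There is essentially no obstacle: the sole substantive input is the elementary fact that the subgroup of $\ZZ$ generated by the $f_{\fp/p}$ for $\fp\mid p$ is precisely $g_p\ZZ$. The only minor care needed is to ensure that the resulting product $\fa$ is a fractional ideal, which is automatic since $\alpha$ has only finitely many prime factors and each local choice of exponents $(n_\fp)_{\fp\mid p}$ involves only finitely many primes $\fp$ of $K$.
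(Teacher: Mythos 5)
Your proof is correct and follows essentially the same route as the paper's: multiplicativity of the ideal norm for the forward direction, and B\'ezout's identity on the residue degrees above each $p$ to build a fractional ideal with the required norm for the converse. The only cosmetic difference is that you write $\ord_p(\alpha)$ directly as an integer combination of the $f_{\fp/p}$, whereas the paper first realises $p^{g_p}$ as a norm and then raises it to the power $\ord_p(\alpha)/g_p$.
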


\begin{proof}
Let $\fa\in I_K$. Then 
\[N_{K/\bbQ}(\fa)=\prod_{p}{p^{\sum_{\mathfrak{p}\mid p}{f_{\mathfrak{p}/p}\ord_\mathfrak{p}(\fa)}}}\]
where the product runs over all prime numbers. In particular, the forward implication follows since 
$\ord_p(N_{K/\bbQ}(\fa))=\sum_{\mathfrak{p}\mid p}{f_{\mathfrak{p}/p}\ord_\mathfrak{p}(\fa)}$.  Now let $g_p$ be the greatest common divisor of the residue degrees of the primes above $p$. Write $g_p=\sum_{\mathfrak{p}\mid p}{a_{\mathfrak{p}}f_{\mathfrak{p}/p}}$ for $a_{\mathfrak{p}}\in\bbZ$. Then 
$$p^{g_p}=N_{K/\bbQ}\Bigl(\prod_{\mathfrak{p}\mid p}{\mathfrak{p}^{a_{\mathfrak{p}}}}\Bigr).$$
For $\alpha>0$ we have $\alpha=\prod_p{p^{\ord_p(\alpha)}}$, giving the  reverse implication.
\end{proof}

\begin{corollary}\label{lem:dub}
If $\alpha \in \bbQ_{>0} \cap N_{K/\bbQ}J_K$, then $\alpha\in N_{K/\QQ}I_K$.
\end{corollary}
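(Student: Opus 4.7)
The plan is to deduce this directly from Lemma~\ref{idealnorms} by checking the valuation condition at each rational prime. Since $\alpha>0$ is already given, it suffices to show that for every prime $p$ the greatest common divisor $g_p$ of the residue degrees $f_{\mathfrak{p}/p}$ of primes $\mathfrak{p}$ of $K$ above $p$ divides $\ord_p(\alpha)$.

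First I would fix a prime $p$ and unpack what it means for $\alpha$ to be a local norm at $p$. Writing $K\otimes_\QQ \QQ_p \cong \prod_{\mathfrak{p}\mid p} K_\mathfrak{p}$, the hypothesis $\alpha \in N_{K/\QQ}J_K$ implies in particular that the image of $\alpha$ in $\QQ_p^*$ lies in $N_{K\otimes_\QQ\QQ_p/\QQ_p}(K\otimes_\QQ\QQ_p)^*$. Hence there exist elements $\beta_\mathfrak{p}\in K_\mathfrak{p}^*$, for each $\mathfrak{p}\mid p$, such that
\[
\alpha = \prod_{\mathfrak{p}\mid p} N_{K_\mathfrak{p}/\QQ_p}(\beta_\mathfrak{p}).
\]

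Next I would apply $\ord_p$ to both sides. The standard identity $\ord_p\circ N_{K_\mathfrak{p}/\QQ_p}=f_{\mathfrak{p}/p}\,\ord_\mathfrak{p}$, coming from the decomposition of $(p)$ in the local extension $K_\mathfrak{p}/\QQ_p$, yields
\[
\ord_p(\alpha) = \sum_{\mathfrak{p}\mid p} f_{\mathfrak{p}/p}\,\ord_\mathfrak{p}(\beta_\mathfrak{p}).
\]
The right-hand side is a $\ZZ$-linear combination of the residue degrees $f_{\mathfrak{p}/p}$, so it is divisible by their greatest common divisor $g_p$. This is exactly the divisibility condition demanded by Lemma~\ref{idealnorms}, and applying it for every $p$ concludes the proof.

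There is no serious obstacle here; the step requiring a little care is simply the passage from the idelic statement $\alpha\in N_{K/\QQ}J_K$ to the local norm statement at each prime $p$, together with the clean computation of $\ord_p$ on norms from the local completions. Once these are in hand, the argument reduces to the elementary observation that an integer $\ZZ$-linear combination of the $f_{\mathfrak{p}/p}$ is automatically a multiple of $g_p$.
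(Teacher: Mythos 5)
Your proof is correct and follows the same route as the paper: fix a prime $p$, write $\alpha$ as a product of local norms $\prod_{\mathfrak{p}\mid p}N_{K_\mathfrak{p}/\QQ_p}(\beta_\mathfrak{p})$, apply $\ord_p$ using $\ord_p\circ N_{K_\mathfrak{p}/\QQ_p}=f_{\mathfrak{p}/p}\,\ord_\mathfrak{p}$, and invoke Lemma~\ref{idealnorms}. No differences worth noting.
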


\begin{proof}
Let $p$ be a prime number. Suppose that $\alpha =\prod_{\mathfrak{p}\mid p}{N_{K_\mathfrak{p}/\bbQ_p}(\beta_{\mathfrak{p}})}$ with $\beta_\mathfrak{p}\in K_\mathfrak{p}^*$. Then \[\ord_p(\alpha)=\sum_{\mathfrak{p}\mid p}{\ord_p(N_{K_{\mathfrak{p}}/\bbQ_p}(\beta_\mathfrak{p}))}
= \sum_{\mathfrak{p}\mid p}{f_{\mathfrak{p}/p}\ord_\mathfrak{p}(\beta_\mathfrak{p})}.\]
Therefore, the greatest common divisor of the residue degrees $f_{\mathfrak{p}/p}$ divides $\ord_p(\alpha)$. Now apply Lemma \ref{idealnorms}.
\end{proof}

 Following Odoni  \cite[\S 2]{odoni}, we observe that the norm map induces isomorphisms
\[\frac{I_K}{N_{K/\bbQ}^{-1}(\QQ_{>0}\cap N_{K/\bbQ}K^*)}\longrightarrow \frac{N_{K/\bbQ}I_K}{\QQ_{>0}\cap  N_{K/\bbQ}K^*}=G_{\mathrm{glob}}\]
and
\[\frac{I_K}{N_{K/\bbQ}^{-1}(\QQ_{>0}\cap N_{K/\bbQ}J_K)}\longrightarrow \frac{N_{K/\bbQ}I_K}{\QQ_{>0}\cap N_{K/\bbQ}J_K}=G_{\mathrm{loc}},\]
in the notation of \eqref{eq:def-G}.
Let 
$P_K^+=\{\alpha\fo_K: \sigma(\alpha)>0 \ \ \forall \ \sigma :K\hookrightarrow \bbR\}$
be the group of totally positive principal fractional ideals of $K$. 
Recall that the ideal-theoretic formulation of global class field theory gives an inclusion-reversing bijection between subextensions $L$ of the narrow class field of $K$ and subgroups $H$ of $I_K$ satisfying
\[I_K\supset H\supset P_K^+.\]
For a subextension $L$ corresponding to a subgroup $H$ as above, the Artin map gives an isomorphism
\[I_K/H\rightarrow \Gal(L/K).\]
We have 
\[I_K\supset N_{K/\bbQ}^{-1}(\QQ_{>0}\cap N_{K/\bbQ}J_K)\supset N_{K/\bbQ}^{-1}(\QQ_{>0}\cap N_{K/\bbQ}K^*)\supset P_K^+.\]
Therefore, class field theory tells us that both $N_{K/\bbQ}^{-1}(\QQ_{>0}\cap N_{K/\bbQ}J_K)$ and $N_{K/\bbQ}^{-1}(
\QQ_{>0}\cap N_{K/\bbQ}K^*)$ correspond to finite abelian subextensions of the narrow class field of $K$ which we will denote by $L_{\textrm{loc}}$ and $L_{\textrm{glob}}$ respectively. We have
\[\Gal(L_{\textrm{loc}}/K)\cong \frac{I_K}{N_{K/\bbQ}^{-1}(\QQ_{>0}\cap N_{K/\bbQ}J_K)}\cong G_{\mathrm{loc}}\]
and
\[\Gal(L_{\textrm{glob}}/K)\cong \frac{I_K}{N_{K/\bbQ}^{-1}(\QQ_{>0}\cap N_{K/\bbQ}K^*)}\cong 
G_{\mathrm{glob}}.\]
Observe that $G_{\mathrm{glob}}$ surjects onto $G_{\mathrm{loc}}$ with kernel 
\[\frac{\QQ_{>0}\cap N_{K/\bbQ}J_K}{\QQ_{>0}\cap N_{K/\bbQ}K^*}.\]
The following result shows that this kernel is equal to the knot group.

\begin{lemma}\label{lem:ratio}
We have
\[\frac{\QQ_{>0}\cap N_{K/\bbQ}J_K}{\QQ_{>0}\cap N_{K/\bbQ}K^*}=\mathfrak{K}(K/\bbQ).\]
In particular, 
${\#G_{\mathrm{glob}}}=
{\#G_{\mathrm{loc}}}\cdot\#\knot(K/\QQ).$
\end{lemma}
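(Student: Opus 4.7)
The plan is to exhibit a natural isomorphism between the left-hand side and $\knot(K/\QQ)=(\QQ^*\cap N_{K/\QQ}J_K)/N_{K/\QQ}K^*$. Composing the inclusion $\QQ_{>0}\cap N_{K/\QQ}J_K\hookrightarrow \QQ^*\cap N_{K/\QQ}J_K$ with the projection to the knot group gives a homomorphism $\phi$, and its kernel is evidently $(\QQ_{>0}\cap N_{K/\QQ}J_K)\cap N_{K/\QQ}K^*=\QQ_{>0}\cap N_{K/\QQ}K^*$. So $\phi$ descends to an injection of the left-hand side into $\knot(K/\QQ)$, and it remains only to show that $\phi$ is surjective.

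Surjectivity amounts to showing that every class in $\knot(K/\QQ)$ admits a positive representative, equivalently that any negative element of $\QQ^*\cap N_{K/\QQ}J_K$ can be corrected by a global norm of negative sign. I would split into two cases according to the signature of $K/\QQ$. If $K$ is totally complex, each archimedean place of $K$ is complex, so the local norm from $K_v=\CC$ to $\QQ_\infty=\RR$ is $|\cdot|^2$-valued; hence $\QQ^*\cap N_{K/\QQ}J_K\subset \QQ_{>0}$ already and there is nothing to do. If instead $K$ has a real embedding $\sigma$, weak approximation at the archimedean places produces $\beta\in K^*$ with $\sigma(\beta)<0$ and $\tau(\beta)>0$ at every other real embedding $\tau$; then $N_{K/\QQ}(\beta)<0$, so for a negative $t\in \QQ^*\cap N_{K/\QQ}J_K$ the product $tN_{K/\QQ}(\beta^{-1})$ is a positive representative of the same knot class, still lying in $N_{K/\QQ}J_K$.

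This identifies the kernel of $G_{\mathrm{glob}}\twoheadrightarrow G_{\mathrm{loc}}$, which was computed just before the lemma as $(\QQ_{>0}\cap N_{K/\QQ}J_K)/(\QQ_{>0}\cap N_{K/\QQ}K^*)$, with $\knot(K/\QQ)$. The second assertion of the lemma then follows immediately from multiplicativity of orders in the short exact sequence
\[
1\to \knot(K/\QQ)\to G_{\mathrm{glob}}\to G_{\mathrm{loc}}\to 1,
\]
all groups being finite by class field theory as discussed above.

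I do not anticipate a serious obstacle here; the only mild subtlety is the sign issue caused by the fact that $\knot(K/\QQ)$ is defined using $\QQ^*$ whereas $G_{\mathrm{glob}}$ and $G_{\mathrm{loc}}$ are defined using $\QQ_{>0}$. The weak approximation step above shows that this discrepancy is harmless.
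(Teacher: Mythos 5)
Your proof is correct and follows essentially the same route as the paper's: inject the quotient $(\QQ_{>0}\cap N_{K/\bbQ}J_K)/(\QQ_{>0}\cap N_{K/\bbQ}K^*)$ into $\knot(K/\QQ)$ and show surjectivity by noting it is automatic when $K$ is totally complex and otherwise producing an element of $K^*$ with negative norm. The only (cosmetic) difference is in how that element is found: you use weak approximation at the archimedean places, whereas the paper perturbs a primitive element $\beta$ by a rational $\epsilon$ to flip the sign of exactly one real embedding; both are valid.
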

\begin{proof}
The second part of the lemma is obvious. To see the first part,
we have 
\[\frac{\QQ_{>0}\cap N_{K/\bbQ}J_K}{\QQ_{>0}\cap N_{K/\bbQ}K^*}\hookrightarrow \mathfrak{K}(K/\bbQ)\]
with cokernel 
\[\frac{\QQ^*\cap N_{K/\bbQ}J_K}{(\QQ_{>0}\cap N_{K/\bbQ}J_K)N_{K/\bbQ}K^*}.\]
We claim that this cokernel is trivial. If $K$ is totally complex then the claim follows since 
$\QQ^*\cap N_{K/\bbQ}J_K=\QQ_{>0}\cap N_{K/\bbQ}J_K$. 
Now suppose that $K$ has at least one real embedding.
 It suffices to show that there exists $\alpha\in K^*$ with $N_{K/\bbQ}(\alpha)<0$. Let us write $K=\bbQ(\beta)$ for some $\beta\in K$. Let $\sigma_1,\dots ,\sigma_r$ be the embeddings of $K$ into $\bbR$, ordered such that 
$\sigma_1(\beta)<\sigma_2(\beta)<\dots < \sigma_r(\beta).$ Let $m$ be the number of embeddings $\sigma_i$ for which $\sigma_i(\beta)<0$. If $m$ is odd, then $N_{K/\bbQ}(\beta)<0$. If $m$ is even, choose $\epsilon\in\bbQ$ such that adding $\epsilon$ to $\beta$ changes the sign of $\sigma_i(\beta)$ for exactly one embedding $\sigma_i$. Then $N_{K/\bbQ}(\beta +\epsilon)<0$.
\end{proof}

\section{Asymptotics for 
\texorpdfstring{$\Nloc$}{Nloc}
and 
\texorpdfstring{$\Nglob$}{Nglob}}\label{s:proof}

Our method for  estimating 
 $\Nloc$ and $\Nglob$ is based on  work of 
 Odoni  \cite[\S 1 and \S 2]{odoni}.  
 Recall that $I_K$ is the group of non-zero fractional ideals of $K$.
 We shall proceed by proving 
 asymptotic formulae for analogues of $\Nloc$ and $\Nglob$ in which the  elements to be counted are  
 restricted to finite index subgroups of  $N_{K/\QQ}I_K$.
The characters of the finite quotient groups are used to write down explicit indicator functions which allow us to phrase the counting problems in terms of Dirichlet series. By realising the finite quotient groups in question as Galois groups of abelian subextensions of the narrow class field of $K$, we can apply Odoni's method \cite[\S 2]{odoni} to relate each Dirichlet series to an Artin $L$-series in order to estimate the restricted counting functions asymptotically.
Finally, we shall show how 
 these counts can be related to 
  $\Nloc$ and $\Nglob$ in order to  establish Theorem \ref{thm:asymptotic}.

All of our  indicator functions will involve the
indicator function $\delta$ on $\bbQ_{>0}$ given by
 $$\delta(x)= \begin{cases}1, & \textrm{ if } x\in N_{K/\bbQ} I_K,\\
0, &\textrm{ otherwise.}\end{cases}
$$
This is a multiplicative function by \cite[Lemma 1.1]{odoni}.

\subsection{Local norms}
\label{sec:localnorms}

The aim of this section is to provide an asymptotic formula for $\Nloc$. 
We begin by analysing the counting function
$$
N_{\mathrm{loc},+}(B)=\#\left\{t\in \QQ_{>0}\cap N_{K/\bbQ}J_K: H(t)\leq B \right\}.
$$
Using the results of \S \ref{s:norms}, together with   the  multiplicativity of $\delta$, we find that (cf. \cite[Eq.~(2.3)]{odoni})
\[N_{\textrm{loc},+}(B)=
\frac{1}{\#G_{\mathrm{loc}}}\sum_{\chi}\sum_{a \leq B}{\delta(a)\chi(a)
\sum_{\substack{b\leq B\\ \gcd(a,b)=1}}
{\delta(b)\chi(b)^{-1}}},
\]
where $\chi$ runs through all the characters of the finite group $G_{\textrm{loc}}$.
Let us first consider the contribution  from the trivial character $\chi=\chi_0$. 
Taking $k=a$  
in \cite[Thm.~IIA]{odoni}
we deduce that  there is a constant $c_1(K)>0$ such that 
$$
\sum_{\substack{b\leq B\\ \gcd(a,b)=1}}{\delta(b)}
=
 \frac{c_1(K)\lambda(a) B}{(\log B)^{1-\delta_K}} \left(1+O\left(\frac{1}{\log B}\right)\right)
$$
uniformly in $a$, where 
$$\lambda(a)=\prod_{p\mid a}\left(1+\frac{\delta(p)}{p}+
\frac{\delta(p^2)}{p^2}+\dots\right)^{-1}.
$$
(See  \cite[Eq.~(1.9)]{odoni} for confirmation that the exponent of $\log B$
agrees with what is written here.)
The contribution to $\Nlocp$ from the trivial character is therefore found to be 
$$\frac{1}{\#G_{\mathrm{loc}}}
 \frac{c_1(K) B}{(\log B)^{1-\delta_K}} 
\sum_{a\leq B}{\delta(a)\lambda(a)}
 \left(1+O\left(\frac{1}{\log B}\right)\right).
$$
The function $\lambda(a)$ has constant average order of magnitude and 
an inspection of the proof of \cite[Thm.~IIA]{odoni} confirms that 
\begin{equation}\label{eq:with lambda}
\sum_{a\leq B}{\delta(a)\lambda(a)}=
 \frac{c_2(K)B}{(\log B)^{1-\delta_K}} \left(1+O\left(\frac{1}{\log B}\right)\right),
\end{equation}
for an appropriate constant $c_2(K)>0$.
Hence the trivial character makes the overall contribution
$$\frac{1}{\#G_{\mathrm{loc}}}
 \frac{c(K) B^2}{(\log B)^{2(1-\delta_K)}} 
 \left(1+O\left(\frac{1}{\log B}\right)\right)
 $$
to $\Nlocp$,
where $c(K)=c_1(K)c_2(K)$.

Turning to the contribution from the non-trivial characters, an application of the triangle inequality shows that they make an overall contribution 
\begin{equation}\label{eq:non-trivialcontribution}
\leq \frac{1}{\#G_{\mathrm{loc}}}
\sum_{\chi\neq \chi_0}
\sum_{a\leq B}{\delta(a)} \Bigg|
\sum_{\substack{b\leq B\\ \gcd(a,b)=1}}{\delta(b)\chi(b)^{-1}}\Bigg|.
\end{equation}
We now  appeal to the proof of \cite[Thm.~IIB]{odoni}, which shows that for a non-trivial character $\chi$ of $G_{\mathrm{loc}}$
there exists a constant $e_K>0$ such that 
\begin{equation}\label{eq:boundnon-trivial}
\Bigg|\sum_{\substack{b\leq B\\ \gcd(a,b)=1}}{\delta(b)\chi(b)^{-1}} \Bigg|=O\left( \frac{\lambda(a) B}{(\log B)^{1-\delta_K+e_K}}\right)
\end{equation}
uniformly in $a\leq B$.  This part of Odoni's proof involves relating the Dirichlet series 
\[\sum_{b=1}^{\infty}{\delta(b)\chi(b)b^{-s}}\]
to an Artin $L$-series, using the fact that $\chi$ is a character of the Galois group of an abelian subextension of the narrow class field of $K$, as shown in \S \ref{s:norms}.

Combining \eqref{eq:non-trivialcontribution} and \eqref{eq:boundnon-trivial} with \eqref{eq:with lambda}, we conclude that
\begin{equation}\label{eq:nlocp}
\Nlocp=\frac{1}{\#G_{\mathrm{loc}}}
 \frac{c(K) B^2}{(\log B)^{2(1-\delta_K)}} 
 \big(1+o(1)\big).
\end{equation}

It remains to relate $\Nlocp$ to $\Nloc$ in order to prove the  first part of Theorem \ref{thm:asymptotic}.
If $K$ is totally complex then 
$\Nloc=\Nlocp$. Now suppose that $K$ has at least one real embedding. If $-1\in N_{K/\QQ}J_K$ then
$\Nloc=2\Nlocp$. The final case to consider is therefore the case in which $K$ has at least one real embedding and $-1\notin N_{K/\QQ}J_K$.

We shall handle $\Nloc$ via the observation that the map $t\mapsto |t|$ gives a bijection of sets
$$
\left\{t\in \QQ\cap N_{K/\bbQ}J_K: H(t)\leq B \right\}
\rightarrow 
\left\{t\in T_K: H(t)\leq B \right\},
$$
where $T_K=\QQ_{>0}\cap [-N_{K/\QQ} J_K \cup N_{K/\QQ} J_K]$.
Thus 
$$
\Nloc=\#
\left\{t\in T_K: H(t)\leq B \right\}.
$$
In the proof of Lemma \ref{lem:ratio}, we saw that when $K$ has at least one real embedding $N_{K/\QQ}K^*$ contains negative elements. When combined with our assumption that $-1\notin N_{K/\QQ}J_K $, this shows that $T_K$ contains $\QQ_{>0}\cap N_{K/\QQ} J_K$ as an index $2$ subgroup. 
In particular, it follows that  
$
G=N_{K/\QQ} I_K/T_K$
is a finite abelian group, which by class field theory corresponds to an abelian subextension of 
the narrow class field of $K$. It is clear that 
$$
\#G=\frac{1}{2}\#G_{\mathrm{loc}}, 
$$
whence
$$
\Nloc
=\frac{2}{\#G_{\mathrm{loc}}}\sum_{\chi}\sum_{a\leq B}\delta(a)\chi(a)\sum_{\substack{b\leq B\\ \gcd(a,b)=1}}\delta(b)\chi(b)^{-1},
$$
where 
 $\chi$ runs through all the characters of $G$. 
This can now be estimated using 
Odoni's argument.  
Once combined with \eqref{eq:nlocp},
we conclude that 
$$
\Nloc=\frac{2^{j(K)}}{\#G_{\mathrm{loc}}}
 \frac{c(K) B^2}{(\log B)^{2(1-\delta_K)}} 
 \big(1+o(1)\big),
$$
where 
\begin{equation}\label{eq:j}
j(K)=\begin{cases}
1, & \text{if $K$ has a real embedding,}\\
0, & \text{if $K$ is totally complex}.
\end{cases}
\end{equation}
This is satisfactory for the first part of Theorem \ref{thm:asymptotic}.

\subsection{Global norms}

Our starting point is an asymptotic formula for 
$$
N_{\mathrm{glob},+}(B)=\#\left\{t\in \QQ_{>0}\cap N_{K/\bbQ}K^*: H(t)\leq B \right\}.
$$
As before, following \cite[\S 2]{odoni}, we find that 
\begin{align*}
N_{\textrm{glob},+}(B)
&=\frac{1}{\#G_{\textrm{glob}}}\sum_{\chi}\sum_{a\leq B}{\delta(a)\chi(a)\sum_{\substack{b\leq B\\ \gcd(a,b)=1}}
{\delta(b)\chi(b)^{-1}}}
\end{align*}
where now $\chi$ runs through all the characters of the finite group $G_{\textrm{glob}}$. The same argument used to 
establish \eqref{eq:nlocp} runs through as before with the outcome that 
$$
\Nglobp=\frac{1}{\#G_{\mathrm{glob}}}
 \frac{c(K) B^2}{(\log B)^{2(1-\delta_K)}} 
  \big(1+o(1)\big).
$$
Here the values of the constants $c(K)$ and $\delta_K$ are exactly as in \eqref{eq:nlocp}.
We would like to deduce from this that 
\begin{equation}\label{eq:331}
\Nglob=\frac{2^{j(K)}}{\#G_{\mathrm{glob}}}
 \frac{c(K) B^2}{(\log B)^{2(1-\delta_K)}} 
  \big(1+o(1)\big),
\end{equation}
where $j(K)$ is given by \eqref{eq:j}. This will suffice for  Theorem~\ref{thm:asymptotic}. 

If $K$ is totally complex then it is clear that
$\Nglob=\Nglobp$ and so we are done. Suppose  that $K$ has at least one real embedding. 
If  $-1\in N_{K/\QQ}K^*$ then 
$\Nglob=2\Nglobp$, which is satisfactory.
The remaining case to consider is therefore the case in which $K$ has at least one real embedding and $-1\notin N_{K/\QQ}K^*$. This is done in the same way as in \S\ref{sec:localnorms}, using the bijection
$$
\left\{t\in N_{K/\bbQ}K^*: H(t)\leq B \right\}
\rightarrow 
\left\{t\in U_K: H(t)\leq B \right\}, \quad t\mapsto |t|,
$$
where $U_K=\QQ_{>0}\cap [-N_{K/\QQ} K^* \cup N_{K/\QQ} K^*]$.


\begin{thebibliography}{999} 

\bibitem[Bar81a]{bartels-81}
H.-J. Bartels, Zur Arithmetik von Konjugationsklassen in 
algebraischen Gruppen. {\em 
J.\ Algebra}  {\bf 70}  (1981), 179--199.

\bibitem[Bar81b]{bartels-81'}
H.-J. Bartels, Zur Arithmetik von
Diedergruppenerweiterungen. {\em Math.\ Ann.}  {\bf 256}  (1981),
465--473.   

\bibitem[Bha14]{bhargava}
M. Bhargava, A positive proportion of plane cubics fail the Hasse principle.
{\em Submitted}, 2014. (\texttt{arXiv:1402.1131})


\bibitem[dlBB14]{coflasque}
R. de la Bret\`eche and T. D.  Browning, Contre-exemples au principe de Hasse pour certains tores coflasques. 
{\em J. Th\'eor. Nombres Bordeaux} {\bf 26} (2014), 25--44.


\bibitem[CF67]{CF}
J. W. S. Cassels and A. Fr\"ohlich,
{\em Algebraic number theory 
(Proc.  Instructional Conf.  Brighton, 1965)}, 
Academic Press, 1967.

\bibitem[CT14]{ct-coflasque}
J.-L. Colliot-Th\'el\`ene, Groupe de Brauer non ramifi\'e d'espaces
homog\`enes de tores.
{\em J. Th\'eor. Nombres Bordeaux} {\bf 26} (2014), 69--83.

\bibitem[CTK98]{CTK}
J.-L. Colliot-Th\'el\`ene and B. \`E. Kunyavski\v{\i}, 
Groupe de Brauer non ramifi\'e des espaces principaux homog\`enes de groupes lin\'eaires. {\em J. Ramanujan Math. Soc.} {\bf 13} (1998), 37--49.



\bibitem[Gur78]{gurak}
S. Gurak, On the Hasse norm principle.
{\em J.\ reine angew.\ Math.} {\bf 299/300} (1978), 16--27.

\bibitem[HN52]{Hochschild-Nakayama}
G. Hochschild and T. Nakayama, Cohomology in Class Field Theory. {\em Ann. of Math.} {\bf 55} (1952), 348--366.



\bibitem[H\"ur84]{hurl}
W. H\"urlimann, 
On algebraic tori of norm type. 
{\em Comment. Math. Helv.} {\bf 59} (1984), 539--549. 

\bibitem[H\"ur86]{hurl'}
W. H\"urlimann, 
$H^3$ and rational points on biquadratic bicyclic norm forms. 
{\em Arch. Math. (Basel)} {\bf 47} (1986), 113--116. 


\bibitem[Lou14]{dan}
D. Loughran,
The number of varieties in a family which contain a rational point. {\em Submitted}, 2014. (\texttt{arXiv:1310.6219}).

\bibitem[Man71]{manin}
Y. I. Manin, Le groupe de Brauer--Grothendieck en g\'eom\'etrie diophantienne. {\em Actes du Congr\`es International des Math\'ematiciens (Nice, 1970)}, 401--411, Gauthier-Villars, Paris, 1971.

\bibitem[Neu91]{neukirch}
J.~Neukirch, {\em Algebraic number theory}. Springer-Verlag, 1991.

\bibitem[NSW08]{neukirch2}
J.~Neukirch
A. Schmidt and K. Wingberg, 
{\em Cohomology of number fields}.
Grund.\ math.\ Wissen.\ {\bf 323},
Springer-Verlag, 2008.

\bibitem[Odo73]{odoni}
R. W. K.  Odoni, The Farey density of norm subgroups of global fields. I. {\em Mathematika} {\bf 20} (1973), 155--169. 

\bibitem[Raz77]{razar}
M. J. Razar, 
Central and genus class fields and the Hasse norm theorem. 
{\em Compositio Math.} {\bf 35} (1977), 281--298. 

\bibitem[San81]{sansuc} 
J.-J. Sansuc, 
Groupe de Brauer et arithm\'etique des groupes alg\'ebriques lin\'eaires sur un corps de nombres, {\em J.\ reine angew.\ Math.}  {\bf 327}  (1981), 12--80.

\bibitem[Ser12]{serre}
J.-P Serre, {\em Lectures on $N_X(p)$.} CRC Press, Boca Raton, FL, 2012.


\bibitem[Wei14]{wei}
D. Wei, 
The unramified Brauer group of norm one tori.
{\em Adv.  Math.}
{\bf  254} (2014), 642--663.

\end{thebibliography}
\end{document}